\def\Real{{\mathbf R}}
\def\bbZ{{\mathbf Z}}
\def\Sphere{{{\mathbb S}}}
\def\bbN{{{\mathbb N}}}
\def\bbS{{\mathbb S}}
\newcommand{\mcal}[1]{{\mathcal{#1}}}
\newtheorem{theorem}{Theorem}[section]
\newtheorem{question}{Question}
\newtheorem{lemma}[theorem]{Lemma}
\newtheorem{conjecture}[theorem]{Conjecture}
\newtheorem{definition}[theorem]{Definition}
\title{Towards a refined estimate for topological degree
in one dimension}
\author{Felipe Hern{\'a}ndez}
\date{\today}
\begin{document}
\maketitle

\begin{abstract}
In this paper we prove an inequality inspired by a
conjecture of Brezis, which asks for a bound for the topological degree of a
map from the circle to itself in terms of a nonlocal integral.
\end{abstract}

\section{Introduction}
Thie note concerns a question posed by Haim Brezis regarding the
topological degree of continuous maps from the circle $\Sphere^1$ to
itself~\cite[Open Problem 3]{brezis2006new}.

\begin{conjecture}
\label{conj:main-Q}
There exists a constant $C>0$ and $\delta_0>0$
such that the following holds:
For any continuous function $f\in C^0(\Sphere^1;\Sphere^1)$
and $\delta<\delta_0$,
\begin{equation}
\label{eq:conjecture}
|\deg f| \leq C \delta \iint_{|f(s)-f(t)|>\delta} \frac{dsdt}{|s-t|^2}.
\end{equation}
\end{conjecture}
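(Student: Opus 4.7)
The plan is to rewrite the double integral in terms of the source separation $h = t-s$ and exploit the elementary identity $\int_0^{2\pi}[\tilde f(s+h)-\tilde f(s)]\,ds = 2\pi h \deg f$ obtained by lifting.

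First, I would reduce to the case $f \in C^\infty$ by mollification: degrees agree once the uniform approximation is close enough, and the right-hand side of \eqref{eq:conjecture} passes to the limit after a harmless perturbation of the threshold $\delta$ (using the sandwich $\One_{|f_n(s)-f_n(t)|>\delta} \leq \One_{|f(s)-f(t)|>\delta-2\|f_n-f\|_\infty}$ and letting the perturbation go to zero for a.e.\ $\delta$). Lift $f$ to $\tilde f : \Real \to \Real$ with $\tilde f(s+2\pi) - \tilde f(s) = 2\pi n$, $n := \deg f > 0$ (WLOG). By Fubini,
\begin{equation*}
\iint_{|f(s)-f(t)|>\delta} \frac{ds\,dt}{|s-t|^2} \;=\; \int_{-\pi}^{\pi} \frac{B(h)}{h^2}\,dh, \quad B(h) := \bigl|\{s \in \Sphere^1 : |f(s+h)-f(s)| > \delta\}\bigr|.
\end{equation*}

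Next, observe that $|f(s+h)-f(s)| > \delta$ holds whenever $\tilde f(s+h)-\tilde f(s) \bmod 2\pi$ lies in $(c\delta, 2\pi - c\delta)$ for an absolute constant $c > 0$. The identity above says the $s$-mean of $\tilde f(s+h)-\tilde f(s)$ is exactly $nh$. For $h$ in the window $(2c\delta/n, \pi/n)$ this mean sits comfortably inside $(c\delta, 2\pi - c\delta)$, and if the increments concentrate near their mean (as happens for $e^{ins}$) one gets $B(h) \gtrsim 1$ throughout this window, whence the left-hand side is at least $\int_{\delta/n}^{1/n} dh/h^2 \gtrsim n/\delta$, which yields the desired bound after multiplication by $\delta$.

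The principal obstacle is running this argument when the increment distribution fails to concentrate. In the degenerate case of a lift that is flat except for a single steep rise of width $\epsilon$, direct computation still gives $B(h) \gtrsim \epsilon$ on the window $(\epsilon\delta/n, \epsilon)$, again producing $\int B(h)h^{-2}\,dh \gtrsim n/\delta$; this suggests the right general strategy is a dyadic decomposition of the period into source intervals on which $\tilde f$ accumulates prescribed fractions of its total ascent, applying the mean-increment identity locally on each such interval and summing the resulting scale-by-scale contributions. Organizing these scales without double counting---especially in the presence of cancellation between the ascents and descents of $\tilde f$ when $\tilde f$ is not monotonic---is where the full content of the conjecture resides, and it is the step I expect to be the most technically demanding.
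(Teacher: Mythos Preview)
The statement you are attempting to prove is \emph{Conjecture}~\ref{conj:main-Q}, which the paper does not prove; it is listed as an open problem, and the paper instead establishes the weaker Theorem~\ref{thm:main-thm}, where the integral is taken over $\{s,t\in[0,2]\}$ for the lift $\varphi$ rather than over $\Sphere^1\times\Sphere^1$ for $f$. So there is no proof in the paper to compare your proposal against, and your proposal is an attempt at an open question.

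Your proposal is also, by your own account, incomplete. The reduction to $\int B(h)h^{-2}\,dh$ and the mean-increment identity are correct and standard, and your computations for the model cases $f(s)=e^{ins}$ and the single steep rise are fine. But the last paragraph is exactly where the conjecture lives: organizing a dyadic decomposition in the presence of cancellation between ascents and descents is the whole problem, and you have only described the difficulty, not resolved it. The paper's discussion of Question~\ref{flattened-q} makes this concrete: on $[0,1]$ the function $g(s)=\alpha\log(s+\beta)$ gives arbitrarily large $g(1)-g(0)$ with arbitrarily small integral, so any argument that works interval-by-interval without exploiting periodicity must fail. Your dyadic scheme, as sketched, does not yet say how periodicity enters.

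For comparison, the paper's route to Theorem~\ref{thm:main-thm} is combinatorial rather than analytic: it discretizes the range into level sets $E_k$, defines a set $\mcal{S}$ of ``satisfied'' indices each contributing a fixed amount to the integral, and proves a structure theorem stating that either $\#\mcal{S}\gtrsim M$ or the lift has a logarithmic singularity at an endpoint. In the latter case the contribution is recovered only by integrating across the point $1$ into the second period $[1,2]$---which is why the paper's domain is $[0,2]$ and not $[0,1]$, and why Theorem~\ref{thm:main-thm} is not the full conjecture. If you want to push your $B(h)$ approach further, the analogue of this dichotomy is what you would need to build.
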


This conjecture was inspired by the work of
Bourgain, Brezis, and
Nguyen~\cite{bourgain2005new}, which established~\eqref{eq:conjecture} without the
factor of $\delta$ in the right hand side.
More recently, the analogous statement in dimension $d\geq 2$ has been proven by
Hoai-Minh Nguyen~\cite{nguyen2017refined}.  Remarkably, the one-dimensional case
remains the most difficult to handle.

In this note we prove a closely related statement which still seems to capture some
of the difficulty of Conjecture~\ref{conj:main-Q}.  For any map
$f\in C^0(\Sphere^1;\Sphere^1)$ we can find some continuous phase function
$\varphi:\Real\to\Real$
so that $f(e^{2\pi is}) = e^{2\pi i\varphi(s)}$ and $\deg f = \varphi(1)-\varphi(0)$.
The main result is phrased in terms of functions $\varphi$ defined in this way.
\begin{theorem}
\label{thm:main-thm}
Let $\varphi:\Real\to\Real$ be a continuous function such that there exists $D\in\Real$
such that $\varphi(x+1) = \varphi(x) + D$ for any $x\in\Real$.
Then for $\delta<0.01$,
\begin{equation}
\label{eq:main-bd}
|D| \leq 10^9 \delta \iint_{\{s,t\in[0,2] \mid |\varphi(s)-\varphi(t)|>\delta\}} \frac{dsdt}{|s-t|^2}.
\end{equation}
\end{theorem}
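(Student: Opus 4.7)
My plan is to lower-bound the right-hand side by a family of pairs $(s,t)$ satisfying $|\varphi(s) - \varphi(t)|>\delta$, constructed from level sets of $\varphi$. After replacing $\varphi$ by $-\varphi$, assume $D > 0$. The regime $D \leq C_0\delta$ for a suitable absolute constant $C_0$ is handled directly via a contribution estimate near $t = s+1$ (using $\varphi(s+1)-\varphi(s)=D$ and the uniform continuity of $\varphi$ on $[0,2]$). Henceforth I would assume $D \geq C_0\delta$.

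Define the first-passage map $T(y) = \inf\{t \in [0,1] : \varphi(t) = y\}$ for $y \in [\varphi(0), \varphi(0)+D]$. By the intermediate value theorem, $T$ is non-decreasing. Set $y_k = \varphi(0) + k\delta$ and $t_k = T(y_k)$ for $k = 0, 1, \ldots, N$ with $N = \lfloor D/\delta\rfloor$; the $t_k$ then form a strictly increasing sequence in $[0,1]$. Introduce bands $A_k = \{t \in [0,2] : |\varphi(t) - y_k| < \delta/4\}$, which are pairwise disjoint open subsets of $[0,2]$, each nonempty (containing $t_k$). By the triangle inequality, for $(s,t) \in A_j \times A_k$ with $|j-k| \geq 2$ one has $|\varphi(s) - \varphi(t)| \geq 2\delta - \delta/2 > \delta$, and therefore
\[
\iint_{\{|\varphi(s) - \varphi(t)|>\delta\}}\frac{ds\,dt}{|s-t|^2} \;\geq\; \sum_{|j-k|\geq 2}\iint_{A_j\times A_k}\frac{ds\,dt}{|s-t|^2}.
\]

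The main obstacle is lower-bounding this sum by $\gtrsim N = D/\delta$ without any quantitative control on the measures $|A_k|$; these can be arbitrarily small when $\varphi$ transits a band rapidly. Small $|A_k|$, however, forces the adjacent bands $A_{k\pm 1}$ to cluster near $A_k$ inside $[0,2]$, so the $1/|s-t|^2$ kernel compensates for the lost measure. I would formalize this via a dyadic decomposition of $[0,2]$: at each scale $r = 2^{-\ell}$, partition $[0,2]$ into intervals of length $r$, count the $t_k$'s landing in each, and lower-bound the kernel-weighted contribution of pair clusters at that scale. Balancing across scales via a combinatorial and pigeonhole argument should yield the desired bound, with the accumulated constants producing the factor $10^9$.
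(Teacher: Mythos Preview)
Your approach has a genuine gap: it is, in effect, an attempt to prove the $[0,1]$ estimate of Question~\ref{flattened-q}, which the paper shows is \emph{false}. Test it on the paper's counterexample $\varphi(s)=\alpha\log(s+\beta)$ on $[0,1]$, extended $1$-periodically, with $\alpha\ll\delta$ and $\beta$ minuscule. The first-passage times $t_k=\beta(e^{k\delta/\alpha}-1)$ are geometrically spaced with ratio $e^{\delta/\alpha}$, and the bands $A_k\cap[0,1]$ are separated by gaps of size $\sim|A_k|\,e^{\delta/(2\alpha)}$; a direct computation gives each pair $(A_j,A_k)$ with $|j-k|\ge 2$ a contribution only $O(e^{-c\delta/\alpha})$, so your lower bound is $O(Ne^{-c\delta/\alpha})\ll N$. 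Since $\varphi\ge y_N$ on $[1,2]$ here, the bands $A_k$ with $k<N$ have no mass in $[1,2]$, so enlarging the domain to $[0,2]$ buys you nothing. Thus the heuristic ``small $|A_k|$ forces $A_{k\pm1}$ to cluster near $A_k$, and the kernel compensates'' is precisely what fails: the clustering is only to within a factor $e^{\delta/\alpha}$, which $|s-t|^{-2}$ does not offset. Your dyadic pigeonhole cannot help either, since in this example each dyadic interval contains at most one $t_k$. (As an aside, the small-$D$ case also does not go through as written: for $\varphi(s)=Ds$ with $0<D\le\delta/2$ the integrand vanishes on all of $[0,2]^2$; the paper sidesteps this by assuming $M=D/\delta>100$.)

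The paper's route is fundamentally different and genuinely uses periodicity. It proves a structure theorem for $\varphi|_{[0,1]}$: either $\gtrsim M$ levels are ``satisfied'' (each already contributing $\gtrsim 1$ to the $[0,1]$ integral), or else $\varphi$ has a logarithmic singularity at an endpoint, witnessed by $\gtrsim M$ levels $k_i$ and geometrically shrinking scales $\tau_i$ such that $(1-\tau_i,1)$ is $0.03$-full for $k_i$ while $(0,\tau_i)$ is $0.03$-empty for $k_i-M$. The identity $\varphi(s+1)=\varphi(s)+D$ then converts the latter into ``$(1,1+\tau_i)$ is $0.03$-empty for $k_i$'', and pairing the full interval just left of $1$ with the empty interval just right of $1$ yields a contribution $\gtrsim 1$ for each $i$. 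This cross-boundary pairing across $s=1$ is exactly the mechanism your band decomposition misses, and it is what rescues the log example.
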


To prove Theorem~\ref{thm:main-thm}, we investigate a closely related problem in which
the domain of integration is restricted to $[0,1]$ rather than $[0,2]$, and so that no periodicity
conditions are imposed on the function.
\begin{question}
    Is it the case that there exists $C,\delta_0>0$ such that
    for all $g\in C^0([0,1];\Real^+)$ and $\delta<\delta_0$,
    \[
        |g(1)-g(0)| \leq C
        \delta\cdot\iint_{\{s,t\in[0,1] \mid |g(s)-g(t)|>\delta\}} \frac{dsdt}{|s-t|^2}?
    \]
    \label{flattened-q}
\end{question}
The answer to Question~\ref{flattened-q} is
negative: for each $\delta>0$ there exist examples of functions for which
$g(1)-g(0)$ grows as large as desired and for which the integral on the right
is as small as desired.   In fact, the function
$g(s) = \alpha \log(s+\beta)$ works for sufficiently small $\beta$ and
$\alpha$ depending on $\delta$.

The proof of Theorem~\ref{thm:main-thm} will
proceed by showing that all such counterexamples to Question~\ref{flattened-q}
have such a logarithmic-type singularity, either at $0$ or at $1$.  This will allow
us to recover the desired contribution from the integration for $s,t$ on either side of the
point $1$.

The main challenge is that tight logarithmic-type singularities appearing within
the interval $[0,1]$ do not contribute much to the nonlocal integral~\eqref{eq:main-bd}, so
there could be millions of such almost-singularities in $[0,1]$.
Therefore, although $\varphi$ is continuous, its value at any given point is almost meaningless
so instead the argument works more with the values of $\varphi$ on finite intervals.

\subsection{Acknowledgements}
The author is supported by the John and Fannie Hertz Foundation.

\section{The structure result}
The main result is actually a structure theorem for positive functions
$g\in C^0([0,1];\Real)$ that are negative examples for
Question~\ref{flattened-q}.
We fix the function $g$ and the scale $\delta$.
We suppose for simplicity that $M\delta = g(1)$ for some $100<M\in\bbN$,
and that $g(0)=0$.
We also assume without loss of generality that the level sets
$\{s \mid g(s) = k\delta\}$ for $k\in\bbN$ are finite.  If not, one may make
an extremely small perturbation to $g$ to ensure that this is the case.

To state the structure theorem we will need several definitions.
First we define the discretized level sets
\begin{equation}
    E_k := \{s\in[0,1] \mid g(s)\in [k\delta, (k+1)\delta]\}
    \label{ek-defn}
\end{equation}
and their neighbors
\begin{equation}
    N_k := E_{k-1}\cup E_k\cup E_{k+1}.
    \label{nk-defn}
\end{equation}

Using $E_k$ and $N_k$ we may write a simple lower bound for our nonlocal
integral quantity.
\begin{equation}
    \sum_{k\in \bbN}
    \iint_{s\in E_k; t\not\in N_k} \frac{dsdt}{|s-t|^2}
    \leq \iint_{|g(s)-g(t)|>\delta} \frac{dsdt}{|s-t|^2}.
\end{equation}
To simplify the study of the left hand side, we make the following
observation.  If there exists an interval $I\subset[0,1]$ such
that $|E_k\cap I| > 0.01|I|$ and $|N_k^c\cap I| > 0.01|I|$, then
\[
    \iint_{s\in E_k; t\not\in N_k} \frac{dsdt}{|s-t|^2}
    \geq 10^{-4}.
\]
This motivates us to define the set of satisfied indices,
\begin{equation}
    \mathcal{S} = \{k\in\mathbb{N}\mid
        \exists I=(a,b)\subset [0,1] \text{ s.t. }
        |E_k\cap I| > 0.01|I| \text{ and }
        |N_k^c\cap I| > 0.01|I|\},
        \label{sprime-defn}
\end{equation}
for which we have the bound
\begin{equation}
\label{S-int-bd}
\#\mcal{S} \leq 10^4 \iint_{|g(s)-g(t)|>\delta}\frac{dsdt}{|s-t|^2}.
\end{equation}

The definition of $\mathcal{S}$ can be rephrased in terms of the notions
of empty and full intervals, which we define now.
\begin{definition}[Empty and full intervals]
    Let $I\subset [0,1]$ be an interval and let $0<\alpha<1$.
    We say that $I$ is $\alpha$-full for the index $k\in\bbZ$ if
    \[
        |E_k\cap I| > \alpha |I|.
    \]
    On the other hand, we say that $I$ is $\alpha$-empty for $k$ if
    \[
        |N_k^c\cap I| > \alpha |I|.
    \]
\end{definition}
Observe that for $\alpha<0.5$, it is possible for an index to be
both $\alpha$-full and $\alpha$-empty for an interval.

The idea is to prove that either $\mathcal{S}$ is a large set, or else
$g$ has a special behavior near the endpoints $0$ and $1$.
There are several possible behaviors, corresponding to the
different counterexamples to Question~\ref{flattened-q}.

\begin{definition}[Logarithmic singularities]
    We say that a function $g\in C^0([0,1];\Real^+)$ has a
    \emph{logarithmic singularity at the boundary} if there
    exists an $m>0.1M$ and sequences of indices
    \[
        k_0 < k_1 < \cdots < k_m
    \]
    and lengths
    \[
        \tau_0 > 3\tau_1 > \cdots > 3^m\tau_m
    \]
    such that one of the following holds:
    \begin{itemize}
        \item \textbf{Growth near 1}: The interval
        $(1-\tau_i,1)$ is an $0.03$-full
        interval for the index $k_i$ and $(0,\tau_i)$ is an $0.03$-empty
        interval for the indices $k_i-M-1$, $k_i-M$, and $k_i-M+1$.
        \item \textbf{Growth near 0}: The interval
        $(0,\tau_{m-i})$ is an $0.03$-full interval for the index $k_i$
        and $(1-\tau_{m-i},1)$ is an $0.03$-empty interval for the
        indices $k_i+M-1$, $k_i+M$, and $k_i+M+1$.
    \end{itemize}
    A set $\{(k_i,\tau_i)\}_{i=1}^m$ satisfying one of these conditions
    is called a \emph{witness} to the logarithmic growth of $g$.
\end{definition}

We are now ready to state the structure result.
\begin{theorem}
    \label{structure-thm}
    At least one of the following holds.  Either
    \[
        \#\mathcal{S} \geq 0.01M
    \]
    or else $g$ has a logarithmic singularity as defined above.
\end{theorem}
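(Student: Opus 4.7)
The plan is to prove the contrapositive: assuming $\#\mathcal{S} < 0.01M$, I construct a logarithmic-singularity witness. Since the intermediate value theorem guarantees $E_k \neq \emptyset$ for every $k \in \{0, \ldots, M-1\}$, the unsatisfied set $\mathcal{U} := \{0,\ldots,M-1\} \setminus \mathcal{S}$ has cardinality at least $0.99M$. The first task is to associate to each $k \in \mathcal{U}$ a scale $\tau_k \in (0,1]$ and a side $\sigma_k \in \{\mathrm{L},\mathrm{R}\}$ such that $(0, \tau_k)$ or $(1 - \tau_k, 1)$ is $0.03$-full for $k$. The intuition is that since $g(0) = 0$ and $g(1) = M\delta$, the set $N_k^c$ (where $g$ is far from $k\delta$) occupies neighborhoods of both endpoints for interior $k$. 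If the effective hull of $E_k$---obtained by taking a high-$E_k$-density cluster and its $N_k$-saturated surroundings---stayed strictly in the interior, then enlarging it to reach nearby $N_k^c$-mass would yield an interval that is simultaneously $0.01$-full and $0.01$-empty for $k$, contradicting unsatisfaction. A case analysis according to the relative sizes of $|E_k|$, $L_k$, and $1 - R_k$ formalizes this and produces the boundary-reaching scale.

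By pigeonhole, at least $0.495M$ indices in $\mathcal{U}$ share a side; without loss of generality the Left side, giving a set $\mathcal{U}_L$. The crucial next step is to extract a chain $k_0 < k_1 < \cdots < k_m$ from $\mathcal{U}_L$ of length $m > 0.1M$ with consecutive scale ratios at least $3$. Two structural inputs drive the extraction. (i) The first-passage times $p_k := \inf\{s : g(s) = k\delta\}$ are strictly increasing in $k$, and $(0, \tau_k)$ must extend past $p_k$: from $|E_k \cap (0, \tau_k)| \geq 0.03 \tau_k$ and $E_k \cap (0, \tau_k) \subset (p_k, \tau_k)$, we get $\tau_k \geq p_k / 0.97$, giving approximate monotonicity of $\tau_k$ in $k$. (ii) The $E_k$ are essentially disjoint, which caps the number of indices sharing a scale: if $\tau_{k_1} \leq \cdots \leq \tau_{k_J}$, then $\sum_j |E_{k_j} \cap (0, \tau_{k_J})| \geq 0.03 J \tau_{k_1}$ while this sum is at most $\tau_{k_J}$, so a factor-$3$ band of scales admits at most about $100$ indices. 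A greedy selection---starting from the smallest scale and jumping each time to the next $k \in \mathcal{U}_L$ whose scale has grown by at least a factor $3$---then yields the required chain. For the resulting growth-near-$0$ witness, $(1 - \tau_{m-i}, 1)$ being $0.03$-empty for $k_i + M - 1, k_i + M, k_i + M + 1$ follows from continuity of $g$ at $1$ combined with the fact that $(k_i + M)\delta > M\delta = g(1)$, placing the relevant $N^c$ sets into the near-$1$ neighborhood.

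The main obstacle is the geometric chain extraction. A priori, the scales $\tau_k$ for $k \in \mathcal{U}_L$ could cluster in relatively few factor-$3$ bands; the delicate step is to show that the interplay of approximate monotonicity (from $\tau_k \geq p_k/0.97$ and the monotonicity of $p_k$) and the disjointness cap forces the scales to spread over at least $0.1M$ bands. Step~1, associating a boundary-reaching scale to each unsatisfied $k$, is also technically involved: the case analysis must handle \emph{thin} indices where $|E_k|$ is much smaller than $R_k$ and $1 - L_k$ separately, perhaps by first replacing $E_k$ with a denser effective hull built from $N_k$-saturated clusters. The specific numerical constants in the statement ($0.01M$, $0.1M$, and the $0.03$ threshold) appear calibrated so that the loss budgets from the side pigeonhole, the disjointness cap, and the chain extraction all fit within the final margin.
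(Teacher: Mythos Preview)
Your outline has the right overall shape, but two steps break quantitatively, and one of them is a genuine missing idea.

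\textbf{The chain extraction is too weak.} Your disjointness cap says a factor-$3$ band contains at most about $100$ indices, so from $0.495M$ left-side indices you can only extract a chain of length roughly $M/200$, not $m>0.1M$. The fix is already implicit in the hypothesis $k\notin\mathcal{S}$, not in mere disjointness of the $E_k$: if $|k-k'|>1$ and both have maximal left-terminal intervals of lengths $\tau,\tau'$ with $\tau'\le\tau\le 3\tau'$, then on $I=[0,3\tau']$ one has $|E_k\cap I|\ge 0.03\tau>0.01|I|$ and $|E_{k'}\cap I|\ge 0.03\tau'=0.01|I|$; since $E_{k'}\subset N_k^c$, this forces $k\in\mathcal{S}$. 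So any two unsatisfied left-terminal indices that are $2$ apart already have scales separated by a factor $3$, and the chain has length $\gtrsim 0.2M$ after a parity restriction. Your ``approximate monotonicity'' via $\tau_k\ge p_k/0.97$ is not needed and, as stated, does not give the increasing-in-$k$ ordering the greedy selection assumes.

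\textbf{The emptiness condition on the far side is not established.} For growth near $0$ you must show that $(1-\tau_{m-i},1)$ is $0.03$-empty for $k_i+M-1,k_i+M,k_i+M+1$. Your argument is that $g(1)=M\delta<(k_i+M-1)\delta$ and continuity place $N_{k_i+M}^c$ into a neighborhood of $1$. But continuity gives this only on some interval $(1-\epsilon,1)$ with $\epsilon$ depending on $g$, and there is no reason $\epsilon\ge 0.03\,\tau_{m-i}$: nothing prevents $g$ from oscillating up to values near $(k_i+M)\delta$ on most of $(1-\tau_{m-i},1)$ and then dropping to $M\delta$ only at the very end. This is precisely the phenomenon the paper's machinery (canonical intervals, the bottom-heavy/top-heavy dichotomy, grounded intervals, and the $\mathcal{G}_L/\mathcal{G}_R$ split) is built to control: one shows that the indices \emph{not} captured by left-terminal growth have right-terminal intervals whose lengths decay geometrically (Lemmas~\ref{main-bh-lemma}--\ref{decaying-r-lemma}), and then compares the left and right terminal scales for $k_i$ and $k_i+M$ to decide which side furnishes the emptiness. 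Without some structural control on the superlevel sets $E_{>k}$ near $1$, your argument for emptiness does not go through.
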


\begin{proof}[Proof of Theorem~\ref{thm:main-thm} using
Theorem~\ref{structure-thm}]

We apply Theorem~\ref{structure-thm} to the function $\varphi$ restricted to the
interval $[0,1]$.  In the case that $\#\mathcal{S}\geq 0.01M$, then~\eqref{S-int-bd}
implies the result (since $M=\varphi(1)-\varphi(0)$).

We can therefore suppose the existence of a logarithmic singularity, and
without loss of generality we will suppose that there is growth at $1$,
meaning that there exist an increasing sequence $\{k_j\}_{j=1}^m$,
$m>0.1M$, and a geometrically decreasing sequence
$\{\tau_j\}_{j=1}^m$ such that $(1-\tau_i,1)$ is a $0.03$-full
interval for $k_i$ and $(0,\tau_i)$ is an $0.03$-empty interval for
$k_i-M-1,k_i-M$,and $k_i-M+1$.

Then, extending the notion of empty intervals to the function $\varphi$ on $[0,2]$, this implies
that $(1,1+\tau_i)$ is a $0.03$-empty interval for $k_i-1$,$k_i$, and $k_i+1$.
Therefore
\begin{equation}
\int_{s\in (1-\tau_i,1)\cap E_i}\int_{t\in(1,1+\tau_i)\cap N_i^c}
\frac{dsdt}{|s-t|^2} \geq 10^{-6}.
\end{equation}
Each $1\leq i\leq m$ contributes $10^{-6}$, so the result follows.
\end{proof}

\section{Proof of Theorem~\ref{structure-thm}}
Throughout the proof we will suppose that $g\in C^0([0,1];\Real^+)$ is
some fixed positive function with $g(0)=0$ and $M\delta =g(1)$,
and that $\#\mcal{S}\leq 0.01M$, where $\mcal{S}$ is defined
in~\eqref{sprime-defn}.

We first define the set $\mathcal{P}$ of pairs of unsatisfied indices
\begin{equation}
    \mathcal{P} =
    \{k\in\bbN\mid k\not\in \mathcal{S}, k-1\not\in \mathcal{S},
    \text{ and } k+1\not\in \mathcal{S}\}.
    \label{P-defn}
\end{equation}
Observe that $\#\mathcal{P}^c \leq 3\#\mathcal{S}$,
so we will be able to find plenty of indices in $\mathcal{P}$.  This
is useful because the condition $k\in\mathcal{P}$ is actually rather
strong.
\begin{lemma}
    If $k\in \mathcal{P}$ and $I$ is an interval with
    $0.01|I|\leq |E_k\cap I|\leq 0.1|I|$,
    then either $|E_{k-1}\cap I| > 0.8|I|$ or $|E_{k+1}\cap I| > 0.8|I|$.
    \label{rigidity-lemma}
\end{lemma}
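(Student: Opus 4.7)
The plan is to chain together two applications of the definition of $\mathcal{S}$, using that both $k$ and one of its neighbors lie outside $\mathcal{S}$.

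First I would extract the content of $k\notin\mathcal{S}$ on the given interval $I$. Since $|E_k\cap I|>0.01|I|$ and $k\notin\mathcal{S}$, the definition of $\mathcal{S}$ forces $|N_k^c\cap I|\leq 0.01|I|$, so $|N_k\cap I|\geq 0.99|I|$. Because the $E_j$ are pairwise disjoint up to the finite level sets $\{g=j\delta\}$ (which have measure zero by our standing assumption), this reads
\[
|E_{k-1}\cap I|+|E_k\cap I|+|E_{k+1}\cap I| \geq 0.99|I|.
\]
The hypothesis $|E_k\cap I|\leq 0.1|I|$ then gives $|E_{k-1}\cap I|+|E_{k+1}\cap I|\geq 0.89|I|$, so by symmetry we may assume $|E_{k-1}\cap I|\geq 0.445|I|$.

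Now I would feed this back into the definition of $\mathcal{S}$ at the index $k-1$. Since $k\in\mathcal{P}$, also $k-1\notin\mathcal{S}$, and $|E_{k-1}\cap I|>0.01|I|$, hence $|N_{k-1}\cap I|\geq 0.99|I|$ as well. The key move is inclusion–exclusion inside $I$: because the $E_j$ are (essentially) disjoint,
\[
N_k\cap N_{k-1}=E_{k-1}\cup E_k,
\]
and therefore
\[
|(E_{k-1}\cup E_k)\cap I|\geq |N_k\cap I|+|N_{k-1}\cap I|-|I|\geq 0.98|I|.
\]
Subtracting $|E_k\cap I|\leq 0.1|I|$ yields $|E_{k-1}\cap I|\geq 0.88|I|>0.8|I|$. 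The symmetric case, when $|E_{k+1}\cap I|\geq|E_{k-1}\cap I|$, uses $k+1\notin\mathcal{S}$ in exactly the same way and gives $|E_{k+1}\cap I|>0.8|I|$.

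There is no real obstacle here beyond bookkeeping; the only subtlety worth flagging is the disjointness of the $E_j$ up to measure zero, which is why the finiteness assumption on the level sets $\{g=k\delta\}$ was imposed at the outset. The numerology is engineered so that a $0.01$ slack in each of the two $\mathcal{S}$-conditions, combined with the $0.1$ upper bound on $|E_k\cap I|$, leaves more than $0.8$ of $I$ for a single neighboring level set.
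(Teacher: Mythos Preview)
Your proof is correct and follows essentially the same approach as the paper: first use $k\notin\mathcal{S}$ to get $|N_k\cap I|\geq 0.99|I|$, then invoke a neighbor $k\pm1\notin\mathcal{S}$ to finish. The paper packages the second step as $E_{k+1}\subset N_{k-1}^c$ (giving $|E_{k+1}\cap I|<0.01|I|$ directly) rather than via your inclusion--exclusion on $N_k\cap N_{k-1}$, but both routes yield the same $0.88|I|$ lower bound.
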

\begin{proof}
    Since $k\not\in \mathcal{S}$, it follows that $|N_k\cap I| > 0.99 |I|$.
    Suppose that
    $|E_{k-1}\cap I| > 0.01|I|$.  Then, using the fact that
    $k-1\not\in \mathcal{S}$, we have $|E_{k+1}\cap I| < 0.01|I|$.
\end{proof}

We will now pick a very special set of intervals $\mathcal{I}_k$ which
we call canonical intervals.
\begin{definition}[Canonical intervals]
    \label{canonical-intervals-defn}
    We say that the collection of intervals
    $\mathcal{I}=\{\{(s_{i,k},t_{i,k})\}_{i=1}^{n_k}\}_{k=1}^M$
    are \emph{canonical} if for each $k\in\bbN$, the collection
    $\mathcal{I}$ satisfies the following conditions:
    \begin{itemize}
        \item \textbf{Covering}:
            \[
                E_k \subset
                \bigcup_{i=1}^{n_k} (s_{i,k},t_{i,k}).
            \]
        \item \textbf{Ordering}:
            For each $k\in[M]$,
            \[
                s_{1,k} < t_{1,k} < s_{2,k} < t_{2,k} < \cdots <
                s_{n_k,k} < t_{n_k,k}.
            \]
            In addition we require $s_1>a$ when $k>1$.
        \item \textbf{Balance}:
            For each $i\in[n_k]$
            \[
                |E_k\cap (s_{i,k},t_{i,k})| \geq 0.1(t_{i,k}-s_{i,k}).
            \]
            with equality holding unless $i=n_k$ and $t_i=1$.
    \end{itemize}
    We call the elements of $\mathcal{I}^J_k$ \emph{canonical intervals}
    for the index $k$.
\end{definition}
Notice that the balance condition enforces that either $n_k=1$ and
$E_k=(s,1)$, or else
\[
    E_k \subsetneq \bigcup_{i=1}^{n_k} (s_{i,k},t_{i,k}).
\]

It is not difficult to see that a canonical collection always exists, and
we choose now a canonical collection $\mathcal{I}$.
We write $\mcal{I}_k = \{(s_{i,k},t_{i,k})\}_{i=1}^{n_k}$ for the collection
of canonical intervals covering $E_k$.

One way to think about
$\mathcal{I}_k$ is that it describes a simplification of the level set $E_k$ that
is more robust to the kinds of rapid oscillations that $g$ may have which
ultimately do not affect the nonlocal integral.  A key concept that uses the
collection $\mathcal{I}$ is that of a compatible interval.
\begin{definition}
    An interval $I$ is \emph{compatible} with $\mathcal{I}_k$ if for
    every $(s,t)\in\mathcal{I}_k$, either $(s,t)\subset I$ or $(s,t)$ is
    disjoint from $I$.
\end{definition}

Another important use of the idea of canonical intervals is that
Proposition~\ref{rigidity-lemma} allows us to define a dichotomy between
bottom-heavy intervals and top-heavy intervals.
\begin{definition}[Bottom-heavy and top-heavy intervals]
    Let $I\in\mathcal{I}_k$ be a canonical interval with
    $|E_k\cap I|=0.1|I|$.
    We say that $I$ is a \emph{bottom-heavy} interval if $|I\cap E_{k+1}| > 0.8|I|$.
    If on the other hand $|I\cap E_{k-1}| > 0.8|I|$, then we say that $I$ is
    a \emph{top-heavy} interval.
\end{definition}

If $k\in P$ and $I\in\mathcal{I}_k$, then $I$ must be either top-heavy,
bottom-heavy, or else $|I\cap E_k| > 0.1|I|$.    By the balance condition,
the latter case can only happen when the right-endpoint of $I$ is $1$.  It
will be useful to introduce some terminology to handle this case.

\begin{definition}[Terminal intervals]
    A \emph{terminal} interval for the index $k$ is an interval
    $I=(a,b)\subset [0,1]$ such that $|I\cap E_k| > 0.03|I|$ and
    either $a=0$ or $b=1$.  If $a=0$, then $I$ is a \emph{left-terminal} interval.
    Conversely if $b=1$ then $I$ is a \emph{right-terminal} interval.
    A \emph{maximal} terminal interval $I$ is a terminal interval
    with the property that any interval $I'$ with $I\subset I'$
    satisfies $|I'\cap E_k| < 0.03|I|$.
\end{definition}

The following simple lemma is one main reason we can hope to
demonstrate the existence of a logarithmic divergence.
\begin{lemma}
    Let $|k-k'|>1$ and either $k\not\in\mathcal{S}$ or
    $k'\not\in\mathcal{S}$.  Suppose $k$ and $k'$ have maximal left-terminal
    intervals of length $\tau$ and $\tau'$, respectively.  Then either
    $\tau>3\tau'$ or $\tau'>3\tau$.  The same statement holds for maximal
    right-terminal intervals.
    \label{interval-growth}
\end{lemma}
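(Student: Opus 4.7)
The plan is to argue by contradiction: suppose both $\tau \leq 3\tau'$ and $\tau' \leq 3\tau$ hold, and use the hypothesis to relabel so that $k \notin \mathcal{S}$. The workhorse is the dichotomy coming from $k \notin \mathcal{S}$: for every interval $I$, either $|E_k \cap I| \leq 0.01|I|$ or $|N_k \cap I| \geq 0.99|I|$. I would apply this to the larger of the two left-terminal intervals, $I^{*} := (0,\max(\tau,\tau'))$, and treat the cases $\tau' \leq \tau$ and $\tau \leq \tau'$ separately. In both cases the disjointness of $E_{k'}$ from $N_k$, which follows from $|k-k'|>1$, will be essential for converting information about $N_k$ into information about $E_{k'}$.

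When $\tau' \leq \tau$, the terminal property of $(0,\tau)$ for $k$ forces $|E_k \cap I^{*}| > 0.03|I^{*}|$, so only the second branch of the dichotomy is available and $|N_k^{c} \cap I^{*}| \leq 0.01\tau$. Since $E_{k'}$ is disjoint from $N_k$, $|E_{k'} \cap (0,\tau')| \leq 0.01\tau \leq 0.03\tau'$ after invoking $\tau \leq 3\tau'$, contradicting the terminal property of $(0,\tau')$ for $k'$. When $\tau \leq \tau'$, apply the dichotomy to $I^{*} = (0,\tau')$: if $|N_k \cap I^{*}| \geq 0.99\tau'$, the same disjointness gives $|E_{k'} \cap I^{*}| \leq 0.01\tau' < 0.03\tau'$, contradicting the terminal property of $(0,\tau')$; otherwise $|E_k \cap I^{*}| \leq 0.01\tau'$, whence $|E_k \cap (0,\tau)| \leq 0.01\tau' \leq 0.03\tau$ via $\tau' \leq 3\tau$, contradicting the terminal property of $(0,\tau)$. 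The right-terminal statement follows by reflecting the interval $[0,1]$ about $1/2$.

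The main conceptual obstacle, and what makes the bookkeeping work, is recognizing that the numerical ratio $0.03/0.01 = 3$ between the terminal threshold and the $\mathcal{S}$-threshold matches the factor $3$ in the conclusion exactly; once this is in mind, every sub-case reduces to a one-line volumetric comparison and no deeper structural property of $g$ is needed. I expect the only real subtlety in writing this up to be choosing which of $k$ and $k'$ to place in $\mathcal{S}^c$ and which endpoint to anchor $I^{*}$ at, so that a single application of the dichotomy resolves both orderings of $\tau, \tau'$ uniformly.
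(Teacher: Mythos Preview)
Your argument is correct and follows essentially the same idea as the paper: exploit $E_{k'}\subset N_k^c$ (from $|k-k'|>1$) together with the definition of $\mathcal{S}$ on an interval whose length is comparable to both $\tau$ and $\tau'$. The paper's writeup is slightly cleaner: after assuming $\tau\geq\tau'$ and $\tau<3\tau'$, it takes the single interval $I=[0,3\tau']$ and observes $|E_k\cap I|,|E_{k'}\cap I|>0.01|I|$, which forces both $k$ and $k'$ into $\mathcal{S}$ in one stroke, eliminating your case split on which of $\tau,\tau'$ is larger.
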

\begin{proof}
    Without loss of generality suppose that $\tau>\tau'$ but $\tau<3\tau'$.
    Consider the interval $I=[0,3\tau']$.  Observe that
    $|I\cap E_k|,|I\cap E_{k'}| > 0.01|I|$, which is a contradiction.
\end{proof}

We now take another small step towards demonstrating logarithmic
divergence, which is to define a set of indices and
lengths $\mathcal{G}_L =\{(k_i, \tau_i)\}_{i=1}^m$
which we expect to serve as a witness to logarithmic growth near the
left endpoint $0$.

The set $\mathcal{G}_L$ is defined inductively as follows.  We set
$k_0=0$ and $\tau_0$ to be the length of the maximal left-terminal
interval for the index $0$.\footnote{Observe that $0$ must have a
    left-terminal interval because $\theta(0)=0$.}
Given $k_i$ and $\tau_i$, let $k$ be the least index
$k\in \mathcal{P}$ such that $k>k_i$ and
such that there is a maximal left-terminal
interval $(0,\tau)$ with $\tau > 3\tau_i$.  If such an index exists,
we set $k_{i+1}=k$ and $\tau_{i+1}=\tau$ and continue.
If not, the process halts and the definition of $\mathcal{G}_L$ is
complete.

We also define $\mathcal{G}_R$, the set of indices that are separated from
$\mathcal{G}_L$ and are not in $\mathcal{G}_L$.  More precisely,  define
\[
    \mathcal{G}_R = \{k\in \mathcal{P}\setminus \mathcal{G}_L
    \mid  k-1\not\in \mathcal{G}_L\}.
\]
We have used notation to suggest that $\mathcal{G}_R$ is a good candidate
for a witness of logarithmic growth near the right endpoint $1$.  This
is not clear from the definition, and in fact the largest remaining task
is to prove that this is the case.

Our first main lemma sets up conditions under which we can control the size
of the discretized superlevel sets
\[
    E_{>k} := \{t\in [0,1] \mid g(t) > (k+1)\delta\}
\]
using only information about the index $k$.  In particular we introduce
the notion of a grounded interval.
\begin{definition}[Grounded interval]
    Let $J=(a,b)\subset[0,1]$ be an interval.  We say that $J$ is
    \emph{grounded} for the index $k$ if $J$ is an interval compatible with
    $\mathcal{I}_k$, if $g(a)<(k+1)\delta$, and if for every $I\in \mcal{I}_k$
    with $I\subset J$, $I$ is a bottom-heavy interval.
\end{definition}

\begin{lemma}
    Let $k\in \mathcal{P}$, and let $J=(a,b)\subset[0,1]$ be a grounded
    interval for the index $k$.  Then
    \begin{equation}
        |E_{>k}\cap J| \leq \frac{1}{5}|E_k\cap J|
        \label{bh-bd}
    \end{equation}
    and
    \begin{equation}
        |E_{k}\cap J| < \frac{1}{5}|E_{k-1}\cap J|.
        \label{bh-bd2}
    \end{equation}
    \label{main-bh-lemma}
\end{lemma}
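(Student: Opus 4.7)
The plan is to decompose $J$ into the union $C$ of canonical intervals $I \in \mathcal{I}_k$ contained in $J$ and the complementary gaps $G = J \setminus C$, then estimate each piece separately. Throughout I read ``bottom-heavy'' as $|E_{k-1} \cap I| > 0.8|I|$, the reading consistent with the direction of the stated inequalities and with the intuitive meaning of ``grounded''; under the written definition the indices would need to be flipped throughout.

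First I would derive a strong per-interval estimate on each canonical interval $I \subset J$. Since $I$ is bottom-heavy and $k - 1 \notin \mathcal{S}$ (because $k \in \mathcal{P}$), applying the definition of $\mathcal{S}$ to $I$ with $|E_{k-1} \cap I| > 0.8|I| > 0.01|I|$ forces $|N_{k-1}^c \cap I| \leq 0.01|I|$, i.e.
\[
|(E_{k-2} \cup E_{k-1} \cup E_k) \cap I| \geq 0.99|I|,
\]
and in particular $|E_{>k} \cap I| \leq 0.01|I|$. Combined with the balance condition $|E_k \cap I| = 0.1|I|$, this yields both $|E_{>k} \cap I| \leq \tfrac{1}{10}|E_k \cap I|$ and $|E_k \cap I| \leq \tfrac{1}{8}|E_{k-1} \cap I|$, each strictly stronger than the desired $1/5$.

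Summing over the canonical intervals $I \subset J$, and noting that $E_k \cap J \subset C$ by covering and compatibility, immediately gives (\ref{bh-bd2}) and reduces (\ref{bh-bd}) to showing $E_{>k} \cap G = \emptyset$, i.e.\ that $g < k\delta$ on every gap. I would establish this inductively from left to right. The base case exploits the grounding condition $g(a) < (k+1)\delta$: on a gap $g \notin E_k$, so by continuity $g$ cannot cross from below $k\delta$ to above $(k+1)\delta$, forcing the leftmost gap to be low. For the inductive step on a gap $(t_i, s_{i+1})$, one needs $g(t_i) \leq k\delta$, inherited from $g(s_i) \leq k\delta$ together with the bottom-heavy structure of $I_i$.

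The hardest step will be this last inductive claim: excluding a spike to $g(t_i) \geq (k+1)\delta$ at the right endpoint of a bottom-heavy canonical interval, given only the measure-theoretic dominance of $E_{k-1}$ on $I_i$. A clean resolution probably uses properties of the specific canonical construction---most plausibly that endpoints $s_i, t_i$ of canonical intervals lie on the level set $g^{-1}(\{k\delta, (k+1)\delta\})$, with the bottom-heavy case singling out $g(t_i) = k\delta$---or else a finer application of the $\mathcal{S}$-condition to subintervals straddling $t_i$ to rule out a thin high region at the right end of $I_i$.
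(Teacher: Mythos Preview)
Your decomposition into the union $C$ of canonical intervals and the gaps $G=J\setminus C$ parallels the paper's induction, and your per-interval estimates (using $k-1\notin\mathcal{S}$ on a bottom-heavy $I$) are correct; they dispose of~\eqref{bh-bd2} immediately. (Your reading of ``bottom-heavy'' as $|E_{k-1}\cap I|>0.8|I|$ is indeed the one the paper's own proof uses.) The real gap is exactly where you flagged it: the claim $E_{>k}\cap G=\emptyset$ is \emph{false} in general, so ``$g(t_i)\leq k\delta$'' is the wrong inductive target. Nothing in the canonical construction pins the right endpoint of a bottom-heavy interval to a particular level set, and one can easily have $g(t_i)>(k+1)\delta$ followed by an excursion above level $(k+1)\delta$ that sits entirely in $G$ and contributes to $E_{>k}$.

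The paper does not try to rule this out. When $g(t_i)>(k+1)\delta$ it lets $t'>t_i$ be the first return time to level $(k+1)\delta$ and extracts two consequences of $k-1\notin\mathcal{S}$: first, $t'-t_i<t_i-s_i$, since otherwise the doubled interval $(s_i,2t_i-s_i)$ would be simultaneously $0.4$-full for $E_{k-1}$ (from its left half) and $0.5$-empty for $N_{k-1}$ (from its right half, where $g>(k+1)\delta$); second, on the enlarged interval $(s_i,t')$ one still has $|E_{k-1}\cap(s_i,t')|>0.4|(s_i,t')|$, forcing $|E_{>k}\cap(s_i,t')|\leq 0.01|(s_i,t')|<0.02|I_i|=\tfrac{1}{5}|E_k\cap I_i|$. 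The induction then restarts from $t'$, which again satisfies the grounded hypothesis $g(t')<(k+1)\delta$. So the overshoot is \emph{bounded}, not excluded, and this is precisely why the final constant is $1/5$ rather than the $1/10$ you obtained on canonical intervals alone. Your second suggested fix---apply the $\mathcal{S}$-condition to an interval straddling $t_i$---is the right instrument, but aimed at the wrong conclusion.
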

\begin{proof}
    Let $I_1,I_2,\cdots,I_n$ with $I_i=(s_i,t_i)$ be an enumeration in order
    of all bottom-heavy canonical intervals in $\mathcal{I}_k$ that are
    contained in $J$.   The proof goes by induction on $n$.
    In the case $n=0$, the covering property implies $E_k\cap J$ is empty.
    Therefore $\theta(a) \leq k\delta$ and the intermediate value
    theorem implies that $\theta(s)\leq k\delta$ for all $s\in J$, and
    thus $|E_{>k}\cap J| = 0$.

    Now consider the case $n\geq 1$, and let $I_1=(s_1,t_1)$.
    Again by the intermediate value theorem and the covering
    property of $\mathcal{I}_k$, $\theta(s)\leq k\delta$ for $s\in (a,s_1)$.
    Since $k\in \mathcal{P}$ and $I_1$ is bottom-heavy, we have
    $|I_1\cap E_k| = 0.1|I_1|$, $|I_1\cap E_{>k}|<0.01|I_1|$, and
    $|I_1\cap E_{k-1}| > 0.8|I_1|$.  If $\theta(t_1)\leq (k+1)\delta$,
    then the interval $(t_1,t)$ satisfies the inductive hypothesis, having
    $n-1$ canonical intervals inside.

    The remaining case is that $\theta(t_1)> (k+1)\delta$.  Let $t'$ be
    the first time $t'>t_1$ for which $\theta(t')=(k+1)\delta$.  Suppose
    that $t'>(t_1-s_1)$, and consider
    the interval $I_1' = (s_1, 2t_1-s_1)$.  In this case,
    $|N_{k-1}^c\cap I_1'| > 0.5|I'_1|$ and
    $|E_{k-1}\cap I_1'| > 0.4|I_1'|$, which
    contradicts the assumption that $k-1\not\in \mathcal{S}$.
    Thus it must be that $t' < (t_1-s_1)$.

    Define now the interval $I=(s_1,t')$.
    Then $|E_{k-1}\cap I| > 0.4 |I|$, so
    it follows that $|N_{k-1}^c\cap I| < 0.01|I|$, and in
    particular
    \[
        |E_{>k}\cap I| < 0.01|I| < 0.02|I_1| =
        \frac{1}{5}|E_k\cap I_1|.
    \]
    At this point one can
    now induct, using the smaller interval $I' = [t',t]$ which has fewer
    bottom heavy canonical intervals.
\end{proof}

The following lemma is our main tool for finding grounded intervals.

\begin{lemma}
    Let $m\in \mathcal{G}_R$ and
    $k\in \mathcal{G}_L$ be the largest index in $\mathcal{G}_L$
    satisfying $k<m$.  Let $\tau_k$ be the length of the maximal
    left-terminal interval for $k$.
    Then there exists an interval $J=(s,t)$ such that $s<\tau_k/2$,
    $J$ is grounded for the
    index $m$, and $(t,1)$ is compatible with $\mathcal{I}_m$.
    \label{bh-Ac-lemma}
\end{lemma}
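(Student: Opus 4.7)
The plan is to pick $s$ very close to $0$, so small that it sits to the left of every canonical interval of $\mathcal{I}_m$, and then to greedily extend $t$ to the right through bottom-heavy canonical intervals, stopping just before the first one that fails to be bottom-heavy. The role of $m \in \mathcal{G}_R$ (beyond providing rigidity through $m \in \mathcal{P}$) is essentially only to guarantee that $m \geq 2$: since $0 = k_0 \in \mathcal{G}_L$, the condition $m - 1 \notin \mathcal{G}_L$ forces $m \geq 2$, which in turn forces the leftmost canonical interval of $\mathcal{I}_m$ to satisfy $s_{1,m} > 0$ via the ordering clause of Definition~\ref{canonical-intervals-defn}.

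For the choice of $s$: the open interval $(0, s_{1,m})$ is disjoint from every canonical interval of $\mathcal{I}_m$, hence by the covering property $E_m \cap (0, s_{1,m}) = \emptyset$. So $g$ avoids the band $[m\delta, (m+1)\delta]$ on $(0, s_{1,m})$; combined with $g(0) = 0$ and continuity, this upgrades to $g < m\delta$ on all of $(0, s_{1,m})$. I then take $s$ to be any point in $(0, \min(s_{1,m}, \tau_k/2))$, which is nonempty because $\tau_k > 0$ (every $k \in \mathcal{G}_L$ carries a positive maximal left-terminal interval by construction). This $s$ is not inside any canonical interval and satisfies $g(s) < m\delta < (m+1)\delta$ and $s < \tau_k/2$.

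For the choice of $t$: enumerate the canonical intervals of $\mathcal{I}_m$ as $(s_{1,m}, t_{1,m}), \ldots, (s_{n_m,m}, t_{n_m,m})$. Because $m \in \mathcal{P}$, the trichotomy noted after Lemma~\ref{rigidity-lemma} says each such interval is either top-heavy, bottom-heavy, or (only for the final one, when $t_{n_m,m}=1$) falls into the excluded case $|E_m \cap I| > 0.1|I|$. Let $j^\ast$ be the least index for which $(s_{j^\ast,m}, t_{j^\ast,m})$ fails to be bottom-heavy; set $t = s_{j^\ast,m}$, or $t = 1$ if no such $j^\ast$ exists.

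To verify the conclusion, every canonical interval strictly contained in $J = (s,t)$ is one of the intervals indexed $i < j^\ast$, each of which is bottom-heavy by the choice of $j^\ast$; combined with $g(s) < (m+1)\delta$ this makes $J$ grounded for $m$. Compatibility of both $J$ and $(t,1)$ with $\mathcal{I}_m$ is immediate from the fact that neither endpoint $s$ nor $t$ lies in the interior of any canonical interval, so each canonical interval is either entirely to the left of $t$ (and contained in $J$) or has left endpoint at least $t$ (and contained in $(t,1)$). I do not expect a serious obstacle; the only subtlety is that the last canonical interval may fall into the "neither" case rather than being top-heavy, but this is handled uniformly by defining $j^\ast$ as the first non-bottom-heavy interval rather than the first top-heavy one.
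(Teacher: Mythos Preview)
Your proof is correct for the lemma exactly as stated, and it is much simpler than the paper's. You pick $s$ to the left of the first canonical interval in $\mathcal{I}_m$ (using that $m\in\mathcal{G}_R$ forces $m\geq 2$, hence $s_{1,m}>0$ via the ordering clause), then greedily push $t$ rightward until the first non-bottom-heavy canonical interval. All three conclusions follow immediately; in fact you barely use the hypotheses $m\in\mathcal{P}$ or $k\in\mathcal{G}_L$ beyond securing $m\geq 2$ and $\tau_k>0$.

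The paper, by contrast, works harder and is implicitly proving a stronger statement than the lemma records. It defines $s$ as an infimum inside $(\tau_k/3,1]$, uses a mass argument with $k\notin\mathcal{S}$ to bound $s<\tau_k/2$, and then argues by contradiction (invoking $m\in\mathcal{P}$, $m\notin\mathcal{G}_L$, and Lemma~\ref{main-bh-lemma}) that \emph{every} non-terminal canonical interval in $\mathcal{I}_m$ lying to the right of $s$ is bottom-heavy. Thus the paper's $t$ can be taken at the left endpoint of the final (terminal) canonical interval, so that $J$ stretches essentially across the whole of $[0,1]$. This extra reach is precisely what the downstream applications consume: the proof of Lemma~\ref{decaying-r-lemma} assumes the grounded interval contains $(\tfrac12,1-\tau_m)$, and the proof of Lemma~\ref{end-interval-lemma} assumes $(t,1)$ is itself a terminal interval. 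Neither of these is guaranteed by your construction, since your $t$ could already equal $s_{1,m}$ if the very first canonical interval happens to be top-heavy. So your argument cleanly settles the lemma as written, but the paper's more elaborate route is buying a maximal grounded interval that the stated lemma undersells.
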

\begin{proof}
    Observe that, by the definition of $\mathcal{G}_R$, we have $k<m-1$.

    We directly define
    \[
        s = \inf\{s'\in (\tau_k/3,1] \mid g(s') < (m+1)\delta,
            s'\not\in \bigcup_{I\in \mcal{I}_m\cup \mcal{I}_{m+1}} I\}.
    \]
    First, we claim that $s < \tau_k/2$.  Indeed, for every point
    $s'\in (\tau_k/3,s)$, we either have
    $\theta(s) > (m+1)\delta$ or else there exists
    an interval $I\in\mathcal{I}_m\cup\mathcal{I}_{m+1}$ such that
    $I\subset[0,s)$ and $s'\in I$.
    Thus
    \[
        |(0,s)\cap E_{\geq m}| \geq
        \sum_{\substack{I\in\mathcal{I}_m\cup\mathcal{I}_{m+1} \\
        I\subset (0,s)}} 0.1|I| +
        \Big|(\tau_k/3,s)\setminus
        \bigcup_{I\in\mcal{I}_m\cup\mcal{I}_{m+1}} I\Big| \geq
        0.1 (s-\tau_k/3)
    \]

    If $s>\tau_k/2$, then this implies
    \[
        |(0,\tau_k)\cap E_{\geq m}| \geq
        |(0,\tau_k/2)\cap E_{\geq m}| \geq \frac{1}{60}\tau_k,
    \]
    this would contradict $k\not\in \mathcal{S}$,
    since $|(0,\tau_k)\cap E_k| > 0.03\tau_k$
    and $k< m-1$.

    It remains to show that every $I\in\mathcal{I}_m$ with
    $|I\cap (s',1)|>0$ is either bottom-heavy or a terminal interval.

    On the contrary, suppose that $I=(s_0,s_1)$
    is the first top-heavy interval in $\mathcal{I}_m$
    with $s_0>s$.  Consider $\sigma = s_0 - 2(s_1-s_0)$.  If $\sigma<0$,
    then
    \[
        |E_{m}\cap (0,s_1)| \geq 0.1 (s_1-s_0) \geq 0.03 (s_1-\sigma).
    \]

    In particular $(0,s_1)$ is a left-terminal interval for $m$ with
    $s_1 > \tau_k/3$, which along with Proposition~\ref{interval-growth}
    contradicts $m\not\in \mathcal{G}_L$.

    Therefore we can assume that $\sigma>0$.  The next sub-case to consider
    is $\sigma < s$.  In this case,
    split the interval $(\sigma,s_0)$ into the interval $(\sigma,s)$ and $(s,s_0)$.
    Since $s\not\in I$ for
    any $I\in\mathcal{I}_m\cup \mathcal{I}_{m+1}$,
    we have $|(\sigma,s)\cap E_m| < 0.1 (s-\sigma)$ and
    $|(\sigma,s)\cap E_{m+1}| < 0.1 (s-\sigma)$.  Moreover,
    $|(s,s_0)\cap E_{m}| < 0.1(s_0-s')$ because every canonical interval
    that intersects $(s,s_0)$ is contained in $(s,s_0)$.  Finally,
    $|(s,s_0)\cap E_{m+1}| < 0.02(s_0-s)$ because the interval $(s,s_0)$
    satisfies the conditions of Lemma~\ref{main-bh-lemma}.  In conclusion, we have
    \[
        |E_m\cap (\sigma,s_0)| < 0.1 (s_0-\sigma)
    \]
    and
    \[
        |E_{m+1}\cap (\sigma,s_0)| < 0.1 (s_0-\sigma).
    \]
    These inequalities are mutually incompatible with the hypotheses that
    $m,m+1\not\in \mathcal{P}$, since $|E_m\cap (\sigma,s_1)| > 0.03(s_1-\sigma)$
    and $|E_{m+1}\cap (\sigma,s_1)| > 0.1(s_1-\sigma)$ because $(s_0,s_1)$ is top-heavy.

    The final case to consider is $\sigma>s$.  If $(\sigma,s)$ is compatible with
    $\mcal{I}_k$, then the argument above applies to reach a contradiction.  The remaining
    case therefore is that $\sigma\in (t_0,t_1)\in \mcal{I}_k\cup\mcal{I}_{k+1}$.
    If $(t_0,t_1)\in \mcal{I}_k$, then because $(t_0,t_1)$ is bottom-heavy by hypothesis,

\end{proof}

One consequence of Lemma~\ref{bh-Ac-lemma} and Lemma~\ref{main-bh-lemma} is
that the sets $\mathcal{G}_L$ and $\mathcal{G}_R$ cannot interlace much.
\begin{lemma}
    Let $b\in \mathcal{G}_R$
    and suppose that there exists $k\in \mathcal{G}_L$ such that $k>b$.
    Then the length $\tau_k$
    of the maximal left-terminal interval for $k$
    satisfies $\tau_k>\frac{1}{3}$.
    In particular, $k$ is the largest element in $\mathcal{G}_L$.
    \label{no-gaps}
\end{lemma}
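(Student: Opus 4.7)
The plan is to argue by contradiction: assume $\tau_k\leq 1/3$ and combine Lemmas~\ref{bh-Ac-lemma} and~\ref{main-bh-lemma} to force a conflict with the terminal density of $k$ on $(0,\tau_k)$. Without loss of generality take $k$ to be the least element of $\mathcal{G}_L$ above $b$; since the $\tau$-values in $\mathcal{G}_L$ are increasing, a lower bound for this $\tau_k$ gives the same bound for every larger element. Since $b\in\mathcal{G}_R$, let $k'\in\mathcal{G}_L$ be the largest element of $\mathcal{G}_L$ strictly below $b$; this exists because $k_0=0\in\mathcal{G}_L$. Applying Lemma~\ref{bh-Ac-lemma} to $b$ produces $J=(s,t)$ grounded for $b$ with $s<\tau_{k'}/2$ and $(t,1)$ compatible with $\mathcal{I}_b$. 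The inductive construction of $\mathcal{G}_L$ gives $\tau_k>3\tau_{k'}$, so $s<\tau_k/6$.

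Next, Lemma~\ref{main-bh-lemma} yields $|E_{>b}\cap J|\leq \tfrac{1}{5}|E_b\cap J|\leq \tfrac{1}{25}|J|$. Since $k>b$ we have $E_k\subset E_{>b}$ up to measure zero, hence $|E_k\cap J|\leq \tfrac{1}{25}|J|$. The terminal condition gives $|E_k\cap(0,\tau_k)|>0.03\tau_k$. Combining these two bounds on $(0,\tau_k)$, whose prefix $(0,s)$ has measure less than $\tau_k/6$ and whose remainder $(s,\tau_k)$ is contained in $J$ (at least when $t\geq\tau_k$), forces a contradiction in the regime $\tau_k\leq 1/3$. Once $\tau_k>1/3$ is established, $k$ being the largest element of $\mathcal{G}_L$ is immediate: any later $k''\in\mathcal{G}_L$ would require $\tau_{k''}>3\tau_k>1$.

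The main obstacle is controlling the right endpoint $t$ of $J$: Lemma~\ref{bh-Ac-lemma} does not directly guarantee $t\geq\tau_k$. In the case $t<\tau_k$ one must adapt the argument from the proof of Lemma~\ref{bh-Ac-lemma}: since $(t,1)$ is compatible with $\mathcal{I}_b$, $t$ must coincide with the left endpoint of a top-heavy canonical interval of $\mathcal{I}_b$ (or of $\mathcal{I}_{b\pm 1}$) lying inside $(0,\tau_k)$. Such a top-heavy structure near $0$, by Lemma~\ref{interval-growth} together with the hypotheses $b,b\pm 1\notin\mathcal{S}$, should either produce a new left-terminal interval of geometric size for some intermediate index in $\mathcal{P}$ (contradicting the minimality of $k$ as the next element of $\mathcal{G}_L$ past $k'$) or create an oscillation forcing an index into $\mathcal{S}$, against the standing assumption $\#\mathcal{S}\leq 0.01M$.
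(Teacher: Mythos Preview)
Your overall strategy matches the paper's: obtain a grounded interval for $b$ via Lemma~\ref{bh-Ac-lemma}, then use Lemma~\ref{main-bh-lemma} to show $E_k$ has too little mass on $(0,\tau_k)$ to be left-terminal. But the quantitative step as written does not close. Applying Lemma~\ref{main-bh-lemma} to all of $J=(s,t)$ yields only $|E_k\cap J|\le\tfrac{1}{25}|J|$, and since $|J|$ may be close to $1$ this gives at best $|E_k\cap(s,\tau_k)|\le 0.04$, which does not contradict $|E_k\cap(0,\tau_k)|>0.03\tau_k$ when $\tau_k\le 1/3$. Your prefix bound $|E_k\cap(0,s)|\le s<\tau_k/6$ is likewise already larger than $0.03\tau_k$, so no contradiction follows even if the tail contribution were zero. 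The paper instead applies Lemma~\ref{main-bh-lemma} to the \emph{sub}-interval $(s,\tau_k)$, obtaining $|E_{>b}\cap(s,\tau_k)|\le 0.02(\tau_k-s)$, and bounds the prefix using $k'\notin\mathcal{S}$ (with $(0,\tau_{k'})$ $0.03$-full for $k'$ and $E_k\subset N_{k'}^c$) to get $|E_k\cap(0,s)|<0.01\tau_{k'}<0.004\tau_k$.

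However, applying Lemma~\ref{main-bh-lemma} to $(s,\tau_k)$ requires that interval to be grounded, i.e.\ compatible with $\mathcal{I}_b$; this fails exactly when the point $\tau_k$ lies inside some canonical interval $(s_b,t_b)\in\mathcal{I}_b$. This, not the dichotomy $t\gtrless\tau_k$, is the real case split, and it is where the substance of the lemma lies. Your final paragraph misidentifies the obstruction: by Lemma~\ref{bh-Ac-lemma} there are \emph{no} top-heavy intervals of $\mathcal{I}_b$ to the right of $s$, so the mechanism you sketch cannot occur. The paper handles the obstructive case by first ruling out $t_b<1$ (using that $(s_b,t_b)$ is bottom-heavy and $b\notin\mathcal{G}_L$ to bound $t_b<2\tau_k$, then checking $|E_k\cap(0,\tau_k)|<0.03\tau_k$ anyway), and then, in the remaining case $(s_b,t_b)=(s_b,1)$, using $b\notin\mathcal{G}_L$ to get $0.1(1-s_b)\le|E_b\cap(0,1)|<0.03$, hence $s_b>0.7$ and $\tau_k>s_b>1/3$ directly---no contradiction hypothesis is needed at that stage.
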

\begin{proof}
    Let $a\in \mathcal{G}_L$ be the largest index in $\mathcal{G}_L$
    with $a<b$, and let $\tau_a$ be the length of the maximal
    left-terminal interval for $a$.

    We will first show that $\tau_k$ is in a terminal interval of
    $\mathcal{I}_b$, by first showing that $\tau_k$ is in a canonical
    interval of $\mathcal{I}_b$, and then by demonstrating that it
    is terminal.

    Applying Lemma~\ref{bh-Ac-lemma}, we may find an interval
    $(s,t)$ with $s<\tau_a/2$ which is grounded for the index $b$.
    If $\tau_k\not\in I$ for every $I\in\mathcal{I}_b$, then
    the interval $(s,\tau_k)$ is grounded for the index $b$.
    Then by Lemma~\ref{main-bh-lemma},
    \[
        |E_k\cap (s,\tau_k)| \leq
        |E_{>b}\cap (s,\tau_k)| < \frac{1}{5}|E_b\cap (s,\tau_k)| =
        0.02(\tau_k-s).
    \]
    Moreover since $a\not\in \mathcal{S}$,
    \[
        |E_k\cap (0,s)| < 0.01 s.
    \]
    Combined, these give $|E_k\cap (0,\tau_k)| < 0.02\tau_k$,
    so $\tau_k$ cannot be a left-terminal interval.

    Now we check that $\tau_k$ must be contained in a terminal interval.
    If not, then $\tau_k\in (s_b,t_b)\in\mathcal{I}_b$
    with $t_b<1$.  The same argument used above
    shows that $|E_k\cap (0,s_b)| < 0.02s_b$.  We will now prove that
    $|E_k\cap(s_b,\tau_k)| < 0.02(\tau_k-s_b) + 0.01\tau_k$, which will
    suffice to arrive at the contradiction $|E_k\cap(0,\tau_k)|<0.03\tau_k$.

    First, using the fact that $t_b<1$
    and $b\not\in \mathcal{G}_L$, we have
    \[
        0.1(t_b-s_b) = |E_b\cap (s_b,t_b)| \leq |E_b\cap(0,t_b)| < 0.03t_b.
    \]
    Rearranging and using $s_b<\tau_k$, we can certainly achieve the bound
    $t_b<2\tau_k$.

    We combine this with the bound $|E_k\cap(s_b,t_b)|<0.01(t_b-s_b)$ which
    comes from the fact that $J$ is bottom heavy, to conclude
    \begin{align*}
        |E_k\cap(s_b,\tau_k)|
        \leq  |E_k\cap (s_b,t_b)|
        &\leq  0.01(t_b-s_b) \\
        &< 0.01(2\tau_k - s_b) \leq 0.02(\tau_k-s_b) + 0.01\tau_k,
    \end{align*}
    as desired.

    We have thus demonstrated that $\tau_k \in (s_b,1)\in\mathcal{I}_b$.
    Since $\tau_k < 1$, it must follow that $\tau_a < \frac{1}{3}$.
    This allows us to write
    \[
        0.1 (1-s_b) < |E_b\cap(s_b,1)| < |E_b\cap(0,1)| < 0.03,
    \]
    from which we conclude $s_b > 0.7 > \frac{1}{3}$.  Since $\tau_k>s_b$
    this certainly implies the conclusion.
\end{proof}

Let $\tau_L$ be the length of the largest maximal left-interval in
$\mathcal{G}_L$, and $\tau_R$
be the largest maximal right-interval in
$\mathcal{G}_R$ ($\tau_R=0$ is possible).
The thrust of the previous two lemmas is that we achieve exponential decay
for the maximal right-intervals of the indices in $\mathcal{G}_R$.

\begin{lemma}
    Let $m\in \mathcal{G}_R$,
    and let $k>m+1$.  Let $\tau_m$ be the length of the
    maximal right-interval for $m$.  Then the length $\tau_k$ of the
    maximal right-interval for $k$ satisfies either $3\tau_k < \tau_m$ or
    $\tau_k > \frac{1}{2}$.
    \label{decaying-r-lemma}
\end{lemma}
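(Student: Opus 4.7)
The plan is to argue by contradiction: suppose both $\tau_k \geq \tau_m/3$ and $\tau_k \leq 1/2$. Since $m \in \mathcal{G}_R \subset \mathcal{P}$ gives $m \notin \mathcal{S}$, and $|k-m|>1$ by hypothesis, Lemma~\ref{interval-growth} applied to the maximal right-terminal intervals of $k$ and $m$ yields either $\tau_m > 3\tau_k$ or $\tau_k > 3\tau_m$; the first alternative contradicts the contradiction hypothesis, so we may assume $\tau_k > 3\tau_m$ throughout.

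Next I would exploit the ``anti-coincidence'' of $E_m$ and $E_k$ forced by $m \notin \mathcal{S}$. The interval $I_m := (1-\tau_m,1)$ satisfies $|E_m \cap I_m| > 0.03\tau_m > 0.01|I_m|$ by definition of the maximal right-terminal interval, so since $m \notin \mathcal{S}$ forbids simultaneous densities exceeding $0.01$ for $E_m$ and $N_m^c$ on any single interval, we must have $|N_m^c \cap I_m| \leq 0.01\tau_m$. Since $k \geq m+2$ forces $E_k \subset N_m^c$ up to the measure-zero level set $\{g=(m+2)\delta\}$, this yields $|E_k \cap (1-\tau_m,1)| \leq 0.01\tau_m$. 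Combined with the lower bound $|E_k \cap (1-\tau_k,1)| > 0.03\tau_k > 0.09\tau_m$ (from the definition of $\tau_k$ together with $\tau_k > 3\tau_m$), I obtain
\[
|E_k \cap (1-\tau_k,\,1-\tau_m)| > 0.08\,\tau_m.
\]

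The remaining task is to establish a matching upper bound on $|E_k \cap (1-\tau_k,1-\tau_m)|$. Let $a \in \mathcal{G}_L$ be the greatest index below $m$, with maximal left-terminal interval of length $\tau_a$. Apply Lemma~\ref{bh-Ac-lemma} to obtain a grounded interval $J = (s_0,t_0)$ for $m$ with $s_0 < \tau_a/2$ and $(t_0,1)$ compatible with $\mathcal{I}_m$; inspecting the canonical interval structure, one checks that $t_0 \geq 1-\tau_m$ (the only canonical interval of $\mathcal{I}_m$ that can lie in $(t_0,1)$ is the right-terminal one, whose left endpoint must be at least $1-\tau_m$), so that $(1-\tau_k,1-\tau_m) \subset J$ (using $\tau_k \leq 1/2$ and $s_0 < \tau_a/2 \leq 1/2$). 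Then Lemma~\ref{main-bh-lemma} gives $|E_k \cap J| \leq |E_{>m} \cap J| \leq \tfrac{1}{5}|E_m \cap J| \leq 0.02|J|$, and a parallel anti-coincidence argument applied to the left-terminal interval $(0,\tau_a)$ of $a$ yields $|E_k \cap (0,s_0)| \leq 0.01\tau_a$.

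The main obstacle is closing the final numerical gap: the bound $|E_k \cap J| \leq 0.02|J|$ is a total-mass bound, not a density bound on the relevant sub-interval $(1-\tau_k,1-\tau_m)$ of length $\tau_k - \tau_m$, and it is too weak when $|J|$ is close to $1$ while $\tau_m$ is very small. To bridge the gap I would iterate the second conclusion of Lemma~\ref{main-bh-lemma}, $|E_{m+1} \cap J| \leq \tfrac{1}{5}|E_m \cap J|$, inductively along $m, m+1, \ldots, k-1$ to obtain a geometric bound of the form $|E_k \cap J| \leq 5^{-(k-m)}|E_m \cap J|$. Propagating the grounded condition through the bottom-heavy canonical structure at each step is the technical heart of the argument, and the tightest case to handle is $k = m+2$ with $\tau_m$ extremely small, where one would likely need a localized variant that extracts a grounded sub-interval abutting $1-\tau_m$ directly rather than relying on the global bound on $J$.
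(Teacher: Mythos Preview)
Your reduction via Lemma~\ref{interval-growth} to the regime $3\tau_m<\tau_k<\tfrac12$ and your intended contradiction through $m\notin\mathcal{S}$ are exactly the paper's route, and your ``anti-coincidence'' inequality $|E_k\cap(1-\tau_m,1)|\le 0.01\,\tau_m$ is the same endpoint the paper reaches. The gap is in how you try to close the numerics on $|E_k\cap(1-\tau_k,1-\tau_m)|$.

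Your proposed fix---iterating the second conclusion of Lemma~\ref{main-bh-lemma} along levels $m,m+1,\ldots,k-1$ to obtain $|E_k\cap J|\le 5^{-(k-m)}|E_m\cap J|$---does not work. Groundedness is a level-dependent notion tied to the canonical collection $\mathcal{I}_m$; a bottom-heavy $I\in\mathcal{I}_m$ gives no control over the canonical intervals in $\mathcal{I}_{m+1}$ inside $J$, so you cannot reapply the lemma at level $m+1$ on the same $J$. Even granting the iteration, for $k=m+2$ you would only get $\tfrac{1}{25}|E_m\cap J|$, still of order $|J|\approx 1$, which cannot beat a lower bound of order $\tau_m$.

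What is missing is the localization you allude to in your final sentence but do not carry out. Instead of working on the global grounded interval $J$, choose $s$ \emph{maximal} with $s\le 1-\tau_k$ such that $(s,1-\tau_m)$ is still grounded for $m$. Then either $s=1-\tau_k$, or $s$ is the left endpoint of a canonical interval $(s,t)\in\mathcal{I}_m$ with $t>1-\tau_k$; in the latter case the balance condition gives $|E_m\cap(s,1)|\ge 0.1\big((1-\tau_k)-s\big)$. Now use the \emph{maximality of $\tau_m$} (not of $\tau_k$): since $1-s>\tau_m$, the interval $(s,1)$ is not right-terminal for $m$, so $|E_m\cap(s,1)|<0.03(1-s)$. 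Combining yields $1-s<\tfrac{10}{7}\tau_k$. Then Lemma~\ref{main-bh-lemma} on $(s,1-\tau_m)$ gives
\[
|E_k\cap(1-\tau_k,1-\tau_m)|\le \tfrac{1}{5}|E_m\cap(s,1-\tau_m)|\le \tfrac{1}{5}\cdot 0.03(1-s)<\tfrac{2}{7}\cdot 0.03\,\tau_k,
\]
a bound of order $\tau_k$ rather than $|J|$. Subtracting from $|E_k\cap(1-\tau_k,1)|>0.03\tau_k$ leaves $|E_k\cap(1-\tau_m,1)|>\tfrac{5}{7}\cdot 0.03\,\tau_k>0.01\tau_m$, the desired contradiction. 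The single missing idea is this use of the maximality of $\tau_m$ to force $1-s=O(\tau_k)$; no iteration is needed.
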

\begin{proof}
    By Proposition~\ref{interval-growth}, we only need to rule out the
    case $3\tau_m < \tau_k < \frac{1}{2}$.  By Lemma~\ref{bh-Ac-lemma},
    there exists an interval $J$ containing $(\frac{1}{2},1-\tau_m)$ which
    is grounded for index $m$.  If $\tau_k < \frac{1}{2}$, then
    $1-\tau_k \in (\frac{1}{2},1-\tau_m)$.  Let $s\in J$ be maximal such
    that $(s,1-\tau_m)$ is a grounded interval and $s\leq 1-\tau_k$.
    Observe that either $s=1-\tau_m$ or else $(s,t)\in\mathcal{I}_m$
    for some $t > 1-\tau_k$.  In either case we have
    \[
        |E_m\cap (s,1)| > 0.1 (1-\tau_k-s).
    \]
    Combined with the inequality
    \[
        |E_m\cap (s,1)| < 0.03 (1-s),
    \]
    which follows from the maximality of $\tau_m$, we obtain
    \[
        1-s < \frac{10}{7} \tau_k.
    \]
    Next, because $(s,1-\tau_m)$ is a grounded interval, it follows from
    Lemma~\ref{main-bh-lemma} that
    \[
        |E_k\cap (1-\tau_k,1-\tau_m)|
        \leq |E_k\cap (s,1-\tau_m)|
        \leq \frac{1}{5}|E_m\cap(s,1-\tau_m)|
        < 0.03 \frac{2}{7} \tau_k.
    \]
    Now we compute, using also $\tau_k > 3\tau_m$,
    \begin{align*}
        |E_k\cap(1-\tau_m,1)|
        &= |E_k\cap(1-\tau_k,1)| - |E_k\cap(1-\tau_k,1-\tau_m)|
        \\&\geq 0.03\tau_k(1 - \frac{2}{7}) > 0.01\tau_m.
    \end{align*}
    This violates the fact that $m\not\in\mathcal{S}$, so we have arrived
    at a contradiction.
\end{proof}

We need one final lemma, which is useful in the case that
$\#\mathcal{G}_L < 0.99M$.

\begin{lemma}
    If $k\in \mathcal{G}_R$ and $k\leq M$,
    then there is a right-terminal interval for $k$.
    \label{end-interval-lemma}
\end{lemma}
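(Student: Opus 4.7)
My plan is to dispose of the trivial cases $k \in \{M-1, M\}$ and attack the main case $k \leq M-2$ by contradiction, by converting the grounded interval produced by Lemma~\ref{bh-Ac-lemma} into a grounded interval that reaches all the way to $1$, then exploiting Lemma~\ref{main-bh-lemma}.

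If $k \in \{M-1, M\}$, then $g(1) = M\delta \in [k\delta, (k+1)\delta]$, placing $1 \in E_k$. The canonical interval $(s_{n_k}, 1)$ containing $1$ then satisfies $|E_k \cap (s_{n_k}, 1)| \geq 0.1(1 - s_{n_k})$ by the balance condition and is itself a right-terminal interval.

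For $k \leq M-2$, assume for contradiction that no right-terminal interval for $k$ exists. Any canonical interval $I_i \in \mathcal{I}_k$ with $t_i = 1$ would by the balance condition have $|E_k \cap I_i| \geq 0.1|I_i| > 0.03|I_i|$ and hence be right-terminal; so every $I_i$ has $t_i < 1$ and $|E_k \cap I_i| = 0.1|I_i|$, and by Lemma~\ref{rigidity-lemma} each is bottom-heavy or top-heavy. Continuity together with $g(0) = 0$ and $g(1) = M\delta > (k+1)\delta$ yields a last crossing $T < 1$ with $g(T) = (k+1)\delta$ and $g > (k+1)\delta$ on $(T,1]$, so $(T,1) \subset E_{>k}$ and $T \leq t_{n_k}$. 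Applying Lemma~\ref{bh-Ac-lemma} to $m = k$ produces a grounded interval $(s_0, t_0)$ for $k$; the internal argument of that lemma rules out top-heavy canonical intervals to the right of $s_0$, and our contradiction hypothesis excludes terminal canonical intervals, so every canonical interval in $(s_0, 1)$ is bottom-heavy and $(s_0, 1)$ itself is grounded for $k$. Lemma~\ref{main-bh-lemma} then yields
\[
|E_{>k} \cap (s_0, 1)| \leq \tfrac{1}{5}|E_k \cap (s_0, 1)|,
\]
and the no-right-terminal hypothesis at $a = s_0$ gives $|E_k \cap (s_0, 1)| \leq 0.03(1 - s_0)$, so $|E_{>k} \cap (s_0, 1)| \leq 0.006(1 - s_0)$, while $|E_{>k} \cap (s_0, 1)| \geq 1 - T$ pins $T$ very close to $1$.

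The main obstacle is extracting the final contradiction, since the measure bounds above are tight but do not conflict by a large margin. The key will be to exploit the continuity of $g$: on $(s_0, 1)$ the bottom-heavy structure pins $g$ mostly below $k\delta$, yet $g$ must ascend from $g(s_0) < (k+1)\delta$ to $g(1) = M\delta \gg (k+1)\delta$ on the tiny residual window near $T$, an ascent that should produce either more $E_k$ mass or more $E_{>k}$ mass than is permitted. I expect this step to require careful bookkeeping, possibly iterating the grounded-interval argument at index $k+1$ (exploiting the analogous structure of $\mathcal{I}_{k+1}$) to close the contradiction.
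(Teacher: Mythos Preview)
Your setup is sound and overlaps substantially with the paper's argument: you correctly invoke Lemma~\ref{bh-Ac-lemma}, correctly observe (appealing to its proof) that every canonical interval to the right of $s_0$ must be bottom-heavy once terminal intervals are excluded, and correctly apply Lemma~\ref{main-bh-lemma} to bound $|E_{>k}\cap(s_0,1)|$. But the proof is genuinely incomplete, and you say so yourself. The measure inequalities you obtain, $|E_k\cap(s_0,1)|\leq 0.03(1-s_0)$ and $|E_{>k}\cap(s_0,1)|\leq 0.006(1-s_0)$, are mutually \emph{consistent}; no contradiction can be squeezed out of them alone, and your suggestion to iterate at level $k+1$ heads in the wrong direction.

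The missing idea is to go \emph{down} to level $k-1$ and use $k-1\notin\mathcal{S}$ (which you have, since $k\in\mathcal{G}_R\subset\mathcal{P}$). Rather than argue by contradiction globally, the paper simply exhibits the right-terminal interval: take the last canonical interval $(s_f,t_f)\in\mathcal{I}_k$ and show $(s_f,1)$ is right-terminal. Since $(s_f,t_f)$ is bottom-heavy, $|E_{k-1}\cap(s_f,t_f)|>0.8(t_f-s_f)$; and since $(s_f,t_f)$ is the final canonical interval while $g(1)>(k+1)\delta$, one has $g>(k+1)\delta$ on $(t_f,1)$, so $(t_f,1)\subset N_{k-1}^c$. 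If $1-t_f$ were comparable to $t_f-s_f$, an interval containing both pieces would witness $k-1\in\mathcal{S}$. Hence $1-t_f$ is small, say $1-t_f<\tfrac12(1-s_f)$, and then
\[
|E_k\cap(s_f,1)|\geq |E_k\cap(s_f,t_f)|=0.1(t_f-s_f)>0.05(1-s_f)>0.03(1-s_f),
\]
so $(s_f,1)$ is right-terminal. Your framework already delivers that $(s_f,t_f)$ is bottom-heavy and that $g>(k+1)\delta$ past $t_f$; the only thing you were missing was to pair the $E_{k-1}$ mass against the $N_{k-1}^c$ mass and invoke $k-1\notin\mathcal{S}$.
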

\begin{proof}
    By Lemma~\ref{bh-Ac-lemma}, there exists $s$ and $t$
    such that $(s,t)$ is a grounded interval for $k$, and $(t,1)$ is a
    terminal interval.

    In particular, there exist canonical intervals $J\in\mathcal{I}_k$
    contained in $(s',1)$.  Consider the last such interval
    $J = (s_{n_k,k}, t_{n_k,k}) =: (s_f,t_f)$.  Our claim is that
    $(s_f,1)$ is a right-terminal interval.  Indeed, if
    $t_f = 1$, this means that $|J\cap E_k| > 0.1|J|$, and we are done.

    Otherwise, we have that $|J\cap E_k| = 0.1|J|$ and
    $|J\cap E_{k-1}| > 0.8|J|$, because $J$ must be bottom-heavy.
    In this case, it suffices to show that $1-t_f < (1-s_f)/2$.

    The hypothesis that $J$ is the final canonical interval and
    $k<\delta^{-1}D$ together imply that $\theta(t) > (k+1)\delta$ for all
    $t>t_f$.  Suppose that $t_f < 1-\tau/2$, and consider the interval
    $J' = (1-\tau, 1-\tau/2) \subset [0,1]$.  Observe that
    $|J'\cap E_{k-1}| > 0.35(\tau/2)$ and
    $|J'\cap L_{k-1}^c| > 0.5 (\tau/2)$.
    This would contradict the hypothesis that $k-1\not\in \mathcal{S}$.

    In particular, the interval $I=(s_f,1+(t_f-s_f))\subset[0,2]$ satisfies
    $|I\cap E_k| > 0.01|I|$ and
    \[
        |I\cap E_{>N}| = 0.5|I|.
    \]
\end{proof}

We are now ready to complete the proof of the main result.
\begin{proof}[Proof of Theorem~\ref{structure-thm}]
    We can first assume that $\#\mathcal{S} < 0.01M$, in which case
    we need to demonstrate a logarithmic singularity for $g$.
    As we observed, this assumption implies that
    $\#\mathcal{P}^c \leq 0.03M$.

    We now split the analysis into two cases.  In the first case,
    we suppose that $\#\mathcal{G}_L < 0.9M$.  In this case,
    one has $\#(\mathcal{G}_R\cap [M-1]) > 0.07M$.  Consider the set
    \[
        \mathcal{W} = \{k\in \mathcal{G}_R\cap [M] \mid
        \tau_k < \frac{1}{2} \text{ and } k \text{ is even}\},
    \]
    Replacing `even' with `odd' if necessary, we may assume that
    $\#\mathcal{W} > 0.03M$
    (provided that $M$ is sufficiently large, because at most $100$
    indices can have $\tau_k > \frac{1}{2}$).  Using
    Lemma~\ref{decaying-r-lemma}, we see that
    $\tau_k > 3\tau_{k'}$ when $k'>k$ and $k,k'\in\mathcal{W}$.
    Moreover, as $E_{k-M}$ is empty for each $k\in\mathcal{W}$, we see
    that the indices in $\mathcal{W}$ serve as a witness for a logarithmic
    singularity of $g$ at $1$.

    In the second case, we have $\#\mathcal{G}_L > 0.9M$.  Let
    $\ell$ be the second-largest index in $\mathcal{G}_L$.  By
    Lemma~\ref{no-gaps}, $\mathcal{G}_R\cap[\ell]$ is empty.  Thus
    the set
    \[
        \mathcal{W} = \{k\in \mathcal{G}_L \mid
        k+M \geq \ell+1\text{ and } k,k+M \in \mathcal{P}\}
    \]
    satisfies $\#\mathcal{W} \geq 0.45 M$.  By discarding at most
    two elements of $\mathcal{W}$ and then filtering to odd or
    even indices, we can arrange that $\mathcal{W} > 0.2M$ and that
    $k+M\in\mathcal{G}_R$ for every $k\in\mathcal{W}$.

    Now enumerate the elements of $\mathcal{W}$ in order, writing
    \[
        k_0 < k_1 < \cdots < k_m
    \]
    where $m=\#\mathcal{W}$.  Let $\tau^L_i$ be the length of the
    maximal left-terminal interval for the index $k_i$, and let
    $\tau^R_i$ be the length of the maximal right-terminal interval
    for the index $k_i+M$.  Then by the definition of
    $\mathcal{G}_L$,
    \[
        \tau_0^L < \frac{1}{3}\tau^L_1
        < \cdots < \frac{1}{3^k}\tau^L_i < \cdots < \frac{1}{3^m}\tau^L_m.
    \]
    Moreover, by Lemma~\ref{decaying-r-lemma},
    \[
        \tau_0^R > 3\tau_1^R > \cdots > 3^k\tau^R_i > \cdots > 3^m\tau^R_m.
    \]
    Suppose that for some index $k_i$,
    $\tau_i^L > 30\tau_i^R$.  Then $(0,\tau_i^L)$ is $0.03$-full for
    $k_i$ and $(1-\tau_i^L,1)$ is $0.03$-empty for $k_i+M$.
    On the other hand if $\tau_i^L < \frac{1}{30}\tau_i^R$ then
    $(1-\tau_i^R,1)$ is $0.03$-full for $k_i+M$ and $0.03$-empty
    for $k_i$.

    There can only be at most, say, $20$ indices for which both of the
    above hypotheses fail.  Then splitting into the most common case, we
    either have a witness for a singularity at $0$ or at $1$.
\end{proof}

\bibliographystyle{alpha}
\bibliography{refs}

\end{document}